\documentclass[12pt]{amsart}
\usepackage{amsfonts}
\usepackage{amsmath}
\usepackage{amssymb}
\usepackage[margin=1in]{geometry}
\usepackage{amsthm}   							
\usepackage{mathtools}		
\usepackage{tikz}																						
																						
\usepackage[english]{babel}

\usepackage[hidelinks]{hyperref}

\makeatletter
\@namedef{subjclassname@2020}{\textup{2020} Mathematics Subject Classification}
\makeatother

\newtheorem{theorem}{Theorem}

\newtheorem{remark}[theorem]{Remark}

\newtheorem{problem}[theorem]{Problem}

\numberwithin{equation}{section}

\newcommand{\dif}{\mathop{}\!\mathrm{d}}

\newcommand{\Z}{\mathbb{Z}}
\newcommand{\Q}{\mathbb{Q}}
\newcommand{\R}{\mathbb{R}}

\newcommand{\OK}{\mathcal{O}_K}

\newcommand{\mH}{\mathcal{H}}

\newcommand{\Gal}{\mathrm{Gal}}
\newcommand{\vol}{\mathrm{vol}}

\newcommand{\eps}{\varepsilon}
\newcommand{\vphi}{\varphi}

\DeclarePairedDelimiter\abs{\lvert}{\rvert}		
\DeclarePairedDelimiter\norm{\lVert}{\rVert}

\title{On a question of Gowers related to Littlewood's conjecture}

\author[F. Broucke]{Frederik Broucke}
\address{Alfr\'ed R\'enyi Institute of Mathematics\\ Re\'altanoda utca 13--15\\1053 Budapest\\ Hungary}
\email{broucke.frederik@renyi.hu}

\author{M\'at\'e Matolcsi}
\address{Alfr\'ed R\'enyi Institute of Mathematics, Re\'altanoda utca 13--15, 1053 Budapest, Hungary, and
Department of Analysis and Operations Research, Institute of Mathematics, Budapest University
of Technology and Economics, M˝uegyetem rkp. 3., H-1111 Budapest, Hungary}
\email{matomate@renyi.hu}

\author{Szil\'ard Gy. R\'ev\'esz}
\address{Alfr\'ed R\'enyi Institute of Mathematics\\ Re\'altanoda utca 13--15\\1053 Budapest\\ Hungary}
\email{revesz@renyi.hu}

\subjclass[2020]{}

\keywords{}
\begin{document}

\begin{abstract}
In a blogpost in 2009, Gowers raised a possible approach to Littlewood's conjecture in  Diophantine approximation, leading to a question about the existence of sufficiently many points in the unit cube such that "hyperbolic distance" between any two of them is large. In this note we answer this question  by an explicit construction. This shows that this approach to prove Littlewood's conjecture cannot work, unless some further refinements are added.
\end{abstract}

\maketitle

\section{Introduction}
Littlewood's conjecture in Diophantine approximation states that for any real $\alpha$ and $\beta$
\[
	\liminf_{n\to\infty} n\norm{n\alpha}\norm{n\beta} = 0.
\]
Here $\norm{x}$ denotes the distance to the nearest integer of a real number $x$. This problem is notoriously hard and still open. Some partial progress has been made on the problem, most notably by Einsiedler, Katok, and Lindenstrauss \cite{EKL06}, who showed  that the set of exceptions $(\alpha,\beta) \in \R^2$ to Littlewood's conjecture has Hausdorff dimension $0$.

In a 2009 blogpost about problems related to Littlewood's conjecture, Gowers posed the following question \cite[Problem 3]{Go09}:
\begin{problem}
\label{Gowers' question}
Is there a constant $c>0$ such that for every positive integer $n$ it is possible to find $n$ points in $[0,1]^3$ such that the ``distance" between any two distinct points is at least $c/n$? Here the ``distance'' between points $(x_1,x_2,x_3)$ and $(y_1,y_2,y_3)$ is defined as $\abs{(x_1 - y_1)(x_2 - y_2)(x_3 - y_3)}$.
\end{problem}

The connection with Littlewood's conjecture is as follows. If there exists a counterexample $(\alpha, \beta)$ to Littlewood's conjecture, then the answer to the above question is yes, as one can take
\[
	X_n = \bigl\{\bigl(k/n, k\alpha \mod 1, k\beta \mod 1\bigr): 1\le k \le n\bigr\}.
\]
The distance between distinct points $k$ and $l$ with $k<l$ is at least $((l-k)/n) \norm{(l-k))\alpha}\norm{(l-k)\beta} \ge c/n$, in view of $\liminf_{m\to\infty} m\norm{m\alpha}\norm{m\beta} > 0$.

In this short note, we provide an unconditional affirmative answer to Gowers' question (and its analogs in any dimension $d$) by a lattice construction using some ideas from algebraic number theory.
\section{The construction}
Let us denote by $H^d$ the region bounded by the unit hyperbola in $\R^d$, i.e.\ the open set
\[
	H^d = \{x = (x_1, \dotsc, x_d) \in \R^d: \abs{x_1 \dotsm x_d} < 1\}.
\]
\begin{theorem}
\label{construction}
For any dimension $d$ there exists a full-rank lattice $\Lambda$ in $\R^d$ which intersects trivially with the hyperbolic region: $\Lambda \cap H^d = \{0\}$.
\end{theorem}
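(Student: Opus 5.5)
The plan is to take $\Lambda$ to be the Minkowski embedding of the ring of integers of a totally real number field of degree $d$. First I fix a totally real field $K$ with $[K:\Q]=d$. One can take, for instance, the maximal real subfield of a cyclotomic field $\Q(\zeta_p)$ with $p$ prime and $d \mid (p-1)/2$, choosing the subfield of degree $d$; by Dirichlet's theorem there are infinitely many primes $p \equiv 1 \pmod{2d}$. Alternatively, one can use $\Q(\alpha)$, where $\alpha$ is a root of an irreducible polynomial with $d$ distinct real roots; such a polynomial is obtained by perturbing $\prod_{j=1}^d (x-j)$ by a small Eisenstein-type term. Let $\sigma_1,\dotsc,\sigma_d$ be the $d$ real embeddings of $K$, and define
\[
	\Lambda = \{(\sigma_1(a), \dotsc, \sigma_d(a)) : a \in \OK\}.
\]

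The key steps, in order, are as follows.

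(i) $\Lambda$ is a full-rank lattice in $\R^d$. This is the standard fact that $\OK$ is a free $\Z$-module of rank $d$ and that the embedding matrix $(\sigma_i(\omega_j))$ of an integral basis has determinant $\pm\sqrt{\abs{\Delta_K}} \ne 0$. The reason is that its square is the discriminant $\det(\mathrm{Tr}(\omega_i\omega_j))$, which is nonzero because $K/\Q$ is separable.

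(ii) For $a \in \OK$, the product of the coordinates of its image is $\prod_i \sigma_i(a) = N_{K/\Q}(a)$. This norm is a rational integer, since it is the constant term, up to sign, of the characteristic polynomial of multiplication by $a$ on $\OK$, and that polynomial has integer coefficients. It is nonzero whenever $a \ne 0$, since $K$ is a field and each $\sigma_i$ is injective.

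(iii) Consequently, every nonzero lattice point satisfies $\abs{x_1\dotsm x_d} = \abs{N(a)} \ge 1$, so it lies outside $H^d$. Hence $\Lambda \cap H^d = \{0\}$.

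The main obstacle, and really the only nontrivial point, is the existence of a totally real field of every degree $d$ together with step (i). I would simply cite these as standard: the cyclotomic construction above, plus the non-vanishing of the discriminant. If one wants to avoid $\OK$ entirely, the same argument works with $\Z[\alpha]$ for an algebraic integer $\alpha$ whose minimal polynomial has $d$ real roots. In that case the lattice is spanned by the vectors $(\sigma_i(\alpha)^j)_i$ for $0\le j<d$, which are linearly independent because the Vandermonde determinant is nonzero for distinct roots. The norm of a nonzero element of $\Z[\alpha]$ is still a nonzero integer. This avoids citing anything beyond the construction of such a polynomial, for which the perturbation $\prod_{j=1}^d(x-j) + \eps$-type argument suffices. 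One way to make this concrete is to take $\prod_{j=1}^d(x-2j) - 2$ scaled suitably so that it is Eisenstein at $2$, and then check that its $d$ real roots persist by the intermediate value theorem.
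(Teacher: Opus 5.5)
Your proposal is correct and is essentially the paper's proof. Like the paper, you embed $\OK$ of a totally real degree-$d$ field via its $d$ real embeddings, and observe that the coordinate product of a nonzero lattice point is the nonzero rational integer $N_{K/\Q}(a)$. The paper fixes a real Galois (cyclotomic) field but remarks, as you do, that any totally real field and subrings like $\Z[\alpha]$ also work. Your explicit polynomial $\prod_{j=1}^d(x-2j)-2$ is already Eisenstein at $2$ for $d\ge 2$ without rescaling, and it has $d$ real roots by the sign changes at $1,3,\dotsc,2d+1$.
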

\begin{proof}
Fix\footnote{For example, fix a prime $p \equiv 1 \mod 2d$ and let $K$ be the fixed field inside the cyclotomic extension $\Q(\zeta_p)$ of the (normal) Galois subgroup $H$ generated by $\zeta_p \mapsto \zeta_p^{d}$. This field has order $\frac{p-1}{(p-1)/d} = d$ and is real as $H$ contains complex conjugation (the unique element of $\Gal(\Q(\zeta_p)/\Q)$ of order $2$).} a real Galois extension $K/\Q$ of degree $d$, and let $\OK$ be its ring of integers. Take $\Lambda$ to be the Minkowski embedding of $\OK$. Writing $\Gal(K/\Q) = \{\sigma_i: 1\le i \le d\}$, it is given by
\[
	\Lambda = \bigl\{(\sigma_1(\alpha), \dotsc, \sigma_d(\alpha)): \alpha \in \OK\bigr\}.
\]
This is a full rank lattice; if $(\alpha_1, \dotsc, \alpha_d)$ is an integral basis for $\OK$, then $\Lambda = v_1\Z + \dotsb + v_d\Z$ with $v_j = (\sigma_1(\alpha_j), \dotsc, \sigma_d(\alpha_j))$. Now for $\lambda = (\sigma_1(\alpha), \dotsc, \sigma_d(\alpha)) \in \Lambda$ there holds
\[
	\abs{\sigma_1(\alpha) \dotsm \sigma_d(\alpha)} = \abs{N_{K/\Q}(\alpha)},
\]
where $N_{K/\Q}$ denotes the field norm of the field extension $K/\Q$. For an algebraic integer $\alpha \in \OK$, this norm is an integer, and therefore less than $1$ in absolute value if and only if it is $0$. But if $N_{K/\Q}(\alpha) = 0$ then $\alpha= 0$ and $\lambda = 0$. Hence $\Lambda \cap H^d = \{0\}$.
\end{proof}

\begin{remark}
In the construction it is not necessary that $K$ is a Galois extension; it suffices to take a totally real extension $K$ of degree $d$. Also one can consider the Minkowski embedding of any additive subgroup of $\OK$ of finite index, such as non-zero ideals $I \trianglelefteq \OK$ or subrings of the form $\Z[\alpha]$ where the degree of $\alpha$ is $d$. This reduces the density of the resulting lattice however.
\end{remark}

For example, in dimension two we can take $\Lambda = (1,1)\Z + (\sqrt{2}, -\sqrt{2})\Z$; in dimension three we can take $\alpha = \zeta_7 + \overline{\zeta_7} = 2\cos(2\pi/7)$, $K = \Q(\alpha)$, and
\[
	\Lambda = (1,1,1)\Z + 2\bigl(\cos(\tfrac{2\pi}{7}), \cos(\tfrac{4\pi}{7}), \cos(\tfrac{6\pi}{7})\bigr)\Z + 4\bigl(\cos^2(\tfrac{2\pi}{7}), \cos^2(\tfrac{4\pi}{7}), \cos^2(\tfrac{6\pi}{7})\bigr)\Z.
\]

The covolume of the lattice $\Lambda$ in the construction of Theorem \ref{construction} is given by $\vol(\R^d/\Lambda) = \sqrt{\Delta_K}$, where $\Delta_K$ is the discriminant of $K$ (which is positive if $K$ is totally real). In the above examples the covloume is $2\sqrt{2}$ and $7$ respectively.

\medskip

Any full-rank lattice $\Lambda$ with $\Lambda\cap H^d = \{0\}$ provides now an affirmative answer to Problem \ref{Gowers' question}. Indeed, let $R$ be large and consider the point set
\[
	Y_R = \bigl\{\lambda/R^{1/d}: \lambda \in \Lambda \cap [0,R^{1/d}]^d\bigr\}.
\]
As $\Lambda$ is full rank we have
\[
	\#Y_R \sim \frac{R}{\vol(\R^d/\Lambda)}.
\]
Moreover, if $\lambda/R^{1/d}$ and $\lambda'/R^{1/d}$ are distinct points of $Y_R$, then
\[
	\abs[\bigg]{\frac{(\lambda_1 - \lambda'_1)}{R^{1/d}} \dotsm \frac{(\lambda_d - \lambda'_d)}{R^{1/d}}} > \frac{1}{R}.
\]
So, asymptotically, any constant $c < 1/\vol(\R^d/\Lambda)$ is admissible in Gowers' question. In particular, in dimension $d$ we can asymptotically reach the constant
\begin{equation}
\label{cd}
	c_d = \Bigl(\min\bigl\{\Delta_K: K \text{ totally real number field of degree } d\bigr\}\Bigr)^{-1/2}.
\end{equation}
Determining the minimal discriminant among all number fields of a fixed signature is an important and well-studied problem. It is well known that the minimal discriminant of a totally real quadratic extension is $5$ (attained by $K = \Q(\sqrt{5})$, $\OK = \Z[\frac{1+\sqrt{5}}{2}]$) and of a totally real cubic extension is $49$ (attained by the above example); see e.g.\ \cite[Table 1]{Od90} for the minimal values up to $d=8$. So $c_2 = 1/\sqrt{5}$ and $c_3 = 1/7$.

\begin{remark}
It is known \cite[Theorem 7]{CSD55} that if $\alpha$ and $\beta$ belong to the same cubic field extension of $\Q$, then Littlewood's conjecture holds for $\alpha$ and $\beta$. Curiously, the sets $Y_R$ we constructed exhibiting a ``counterexample to the generalized Littlewood conjecture'' (in the sense of Problem \ref{Gowers' question}) all consist (up to a scaling factor) of elements lying in some fixed cubic field extension of $\Q$.
\end{remark}

\begin{remark}
One can wonder whether there are other constructions for Problem \ref{Gowers' question} which yield a better constant $c$ than the above lattice construction. In dimension $2$ there is another construction, namely
\[
	X_n = \bigl\{\bigl(k/n, k\alpha \mod 1\bigr): 1\le k \le n\bigr\}
\]
where $\alpha$ is a badly approximable number (a ``counterexample to the $1$-dimensional Littlewood conjecture"). The asymptotic best value of $c$ one can reach here is $1/\sqrt{5}$ by taking $\alpha = \frac{1+\sqrt{5}}{2}$, which matches $c_2$ (although this point set is quite different from the lattice construction).
\end{remark}

\section{A connection with the Delsarte problem}
Let $\mathbb{T}^d$ be the $d$-dimensional torus which we identify with $[-1/2, 1/2)^d$, and for a given $0 < \eps \le 2^{-d}$ let
\[
	\mathcal{H}^d_{\eps} = \bigl\{x = (x_1, \dotsc, x_d) \in \mathbb{T}^d \cong [-1/2, 1/2)^d: \abs{x_1 \dotsm x_d} < \eps\bigr\}.
\]
Finally, let $N_d(\eps)$ be the maximal size of a subset $Z \subseteq \mathbb{T}^d$ with the property that any two distinct elements $x$ and $y$ from $Z$ satisfy $x-y\not\in \mathcal{H}_{\eps}^d$. Problem \ref{Gowers' question} is closely related to asking whether $N_{d}(\eps) \gg 1/\eps$ when $\eps \to 0$, and the lattice construction from Theorem \ref{construction} shows that this indeed holds (in fact in any dimension). Namely, take
\[
	Z_{\eps} = \bigl\{\eps^{1/d}\lambda: \lambda \in \Lambda \cap [-\eps^{-1/d}/4,\eps^{-1/d}/4]^d\bigr\},
\]
which has size $\# Z_{\eps} \sim \bigl(2^{d}\vol(\R^d/\Lambda) \eps\bigr)^{-1}$.
(The additional scaling by the factor $1/2$ ensures that the difference $Z_{\eps} -Z_{\eps}$ in $\mathbb{T}^d$ is the same as when taken in $\R^d$.)

With this slight reformulation, there is a natural connection with the Delsarte problem, which is a classical extremal problem in Fourier analysis. Let $\mathcal{F}(\mH^d_{\eps})$ be the class of real-valued continuous functions $\vphi$ on $\mathbb{T}^d$ which are non-positive on $\mathbb{T}^d \setminus \mH^{d}_{\eps}$, are normalized to $\vphi(0)=1$, and are positive definite, meaning that all the Fourier coefficients of $\vphi$ are non-negative.
The Delsarte problem for $\mathcal{H}_{\eps}^d$ asks to determine the constant
\[
	\mathcal{D}(\mathcal{H}_{\eps}^d) = \sup\biggl\{\int_{\mathbb{T}^d} \vphi: \vphi \in \mathcal{F}(\mH^d_{\eps})\biggr\}.
\]
The quantity $\mathcal{D}(\mathcal{H}_{\eps}^d)$ is called the {\it Delsarte constant} of $\mathcal{H}_{\eps}^d$.

\medskip

Next, let $\mathcal{M}(\mH^d_{\eps})$ be the class of positive measures $\nu$ on $\mathbb{T}^d$ which are supported on $\{0\} \cup (\mathbb{T}^d \setminus \mH^d_{\eps})$, are normalized to $\nu(\{0\})=1$, and are positive definite, and
\[
	\mathcal{D}'(\mH^d_{\eps}) = \sup\bigl\{\nu(\mathbb{T}^d): \nu \in \mathcal{M}(\mH^d_{\eps})\bigr\}.
\]
It is known \cite{BFGRR} that $\mathcal{D}(\mH_{\eps}^d)\mathcal{D}'(\mH_{\eps}^d) = 1$, a statement which is called ``strong duality" for the Delsarte problem.

If $Z$ is a set of points with the above separation property, then the measure
\[
	\nu = \frac{1}{\#Z}\delta_Z \ast \delta_{-Z}, \quad \delta_Z = \sum_{x\in Z} \delta_x,
\]
belongs to the class $\mathcal{M}(\mH^d_{\eps})$, and its total mass is $\nu(\mathbb{T}^d) = \#Z$. Hence it follows that $N_d(\eps) \le \mathcal{D}'(\mH^d_{\eps})$.

On the other hand, we may consider
\[
	\vphi = \frac{1}{\eps}\chi \ast \chi, \quad \chi \text{ the indicator function of } [-\eps^{1/d}/2, \eps^{1/d}/2]^d,
\]
which clearly belongs to $\mathcal{F}(\mH^d_{\eps})$ and has total integral $\int_{\mathbb{T}^d} \vphi = \eps$. Hence $\mathcal{D}(\mH_{\eps}^d) \ge \eps$ and so $\mathcal{D}'(\mH_{\eps}^d) \le 1/\eps$. The latter can also be seen as follows without appealing to the strong duality statement: for any $\nu \in \mathcal{M}(\mH_{\eps}^d)$, also the measure $\nu - \nu(\mathbb{T}^d)\dif x$ is positive definite (here $\dif x$ denotes the normalized Haar measure on $\mathbb{T}^d$.) Hence
\[
	0 \le \langle \nu - \nu(\mathbb{T}^d)\dif x, \vphi \rangle = 1 - \nu(\mathbb{T}^d)\int_{\mathbb{T}^d}\vphi = 1 - \eps \nu(\mathbb{T}^d).
\]

In particular, $N_d(\eps) \le 1/\eps$, so in fact $N_d(\eps) \asymp 1/\eps$. Recalling \eqref{cd} we have for the Delsarte constants the inequalitites
\[
	\eps \le \mathcal{D}(\mH_{\eps}^d) \le \frac{2^d \eps}{c_d}(1+o(1)), \quad \frac{c_d}{2^d \eps}(1+o(1)) \le \mathcal{D}'(\mH_{\eps}^d) \le \frac{1}{\eps}, \quad \eps \to 0.
\]

\section{Acknowledgments}
The authors are grateful to Imre Ruzsa for discussing the connection of the problem to the Delsarte constant of the hyperbola. The research was carried out while Frederik Broucke was on a Renyi Postdoctoral Fellowship at the Alfred Renyi Institute of Mathematics.
M\'at\'e Matolcsi was supported by grants NKFIH-154121 and NKFIH-146387. Szil\'ard Gy. R\'ev\'esz was supported in part by Hungarian National Research, Development and
Innovation Office, Grant \#'s K-146387, K-147153 and Excellence No.151341. The authors have not used AI tools for the results of this paper.

\end{document}